\def\deftimestamp#1<#2>{\newcommand\timestamp{\sf 20#2}}
\newtheorem{theorem}{Theorem}
\newtheorem{lemma}[theorem]{Lemma}
\newtheorem{proposition}[theorem]{Proposition}
\theoremstyle{definition}
\newtheorem{definition}[theorem]{Definition}
\theoremstyle{remark}
\newtheorem{remark}[theorem]{Remark}
\DeclareMathOperator{\In}{in}
\DeclareMathOperator{\gin}{gin}
\newcommand{\frakm}{\mathfrak{m}}
\newcommand{\PPii}{K[x_1,x_2]}
\newcommand{\PPiii}{K[x_1,x_2,x_3]}
\newcommand{\PPiv}{K[x_1,x_2,x_3,x_4]}
\newcommand{\PPn}{K[x_1,x_2,\ldots,x_n]}
\newcommand{\ZZ}{\mathbb{Z}}
\newcommand{\HH}{{\mathbf H}}
\begin{document}
%
%
%
%
%
%
%
%
%
\title[Generic initial ideals]
{Generic initial ideals of some monomial complete intersections
  in four variables}
\author{Tadahito Harima}
\address{
  Faculty of Education, \br
  Ehime University, \br
  Ehime 790-8577, JAPAN}
\author{Sho Sakaki}
\address{
  Department of Mathematics, \br
  Hokkaido University of Education, \br
  Kushiro 085-8580, JAPAN}
\author{Akihito Wachi}
\address{
  Division of Comprehensive Education, \br
  Hokkaido Institute of Technology, \br
  Sapporo 006-8585, JAPAN} 
\subjclass{
  Primary
  13A02; 
  Secondary 
  13C40, 
  13F20, 
  13D40. 
}

\keywords{
  generic initial ideal,
  monomial complete intersection,
  strong Lefschetz property,
  almost revlex ideal
}

\date{\small compiled: {\sf\today}, saved: \timestamp}

\begin{abstract}
Let $R = \PPiv$ be the polynomial ring 
over a field of characteristic zero.
For the ideal
$(x_1^a, x_2^b, x_3^c, x_4^d) \subset R$,
where at least one of $a$, $b$, $c$ and $d$ is equal to two,
we prove that its generic initial ideal 
with respect to the reverse lexicographic order
is the almost revlex ideal corresponding to the same Hilbert function.
\end{abstract}

\maketitle

\section{Introduction}

Generic initial ideals
play an important role in commutative ring theory.
But it is very difficult to determine them,
and there are only a few results which determine 
generic initial ideals.
Even in the case of monomial complete intersections,
their generic initial ideals are not determined in general.
Our result is a starting point of this problem.
In the polynomial rings with one or two variables,
generic initial ideals are trivially determined,
since Borel-fixed ideals are unique for Hilbert functions.
In the case of three variables,
due to the result of Ahn-Cho-Park \cite{MR2371960} 
or Cimpoea{\c{s}} \cite{MR2292574},
the generic initial ideals of 
Artinian monomial complete intersections are determined.
In this note we focus on the case of four variables.
For the monomial complete intersections
$(x_1^a, x_2^b, x_3^c, x_4^d)$,
where at least one of $a$, $b$, $c$ and $d$ is equal to two,
we prove that their generic initial ideals 
are the almost revlex ideals
(Theorem~\ref{thm:gin-of-4var-ci}).

Throughout this note,
$K$ denotes a field of characteristic zero,
and $R$ the polynomial ring over $K$.
The only term order on $R$ used in this note is
the reverse lexicographic order 
with $x_1 > x_2 > \cdots$,
and $\gin(I)$ (resp. $\In(I)$) denotes the generic initial ideal
(resp. initial ideal) with respect to 
the reverse lexicographic order.

\section{The $k$-strong Lefschetz property}
\label{sec:k-slp}

In this section
we review the definition of $k$-strong Lefschetz property
and results needed for our main theorem.

\begin{definition}[the SLP and the $k$-SLP]
\label{defn:slp} \label{defn:k-slp}
Let $A$ be a graded Artinian algebra over a field $K$,
and $A = \bigoplus_{i=0}^c A_i$ its decomposition
into graded components.

(1) The algebra $A$ is said to have the
{\em strong Lefschetz property} ({\em SLP} for short),
if there exists an element $\ell \in A_1$ such that
the multiplication map
$\times \ell^s: A_i \to A_{i+s}$ ($f \mapsto \ell^s f$)
is full-rank for every $i \ge 0$ and $s>0$.
In this case, $\ell$ is called a {\em Lefschetz element},
and  we also say that $(A,\ell)$ has the SLP.

(2)
Let $k$ be a positive integer.
The algebra $A$ is said to have the
{\em $k$-strong Lefschetz property} ({\em $k$-SLP} for short),
if there exist linear elements $g_1, g_2, \ldots, g_k \in A_1$
satisfying the following two conditions.
\begin{itemize}
  \item[(i)]
  $(A, g_1)$ has the SLP,
  \item[(ii)]
  $(A/(g_1, \ldots, g_{i-1}), g_i)$ has the SLP
  for all $i = 2, 3, \ldots, k$.
\end{itemize}
In this case, we say that 
$(A, g_1, \ldots, g_k)$ has the $k$-SLP.
In other words,
$A$ is said to have the $k$-SLP,
if $A$ has the SLP with a Lefschetz element $g_1$, 
and $A/(g_1)$ has the $(k-1)$-SLP.

Note that the $1$-SLP is nothing but the SLP,
and that if a graded algebra has the $k$-SLP, then
it has the $(k-1)$-SLP.
Note also that the $n$-SLP is equivalent to the $(n-2)$-SLP
for the quotient rings $\PPn/I$,
since all graded $K$-algebras $K[x_1]/J$ and $\PPii/J$ have the SLP
\cite[Theorem~4.4]{MR1970804}. 
\end{definition}
\begin{definition}[almost revlex ideals]
\label{defn:almost-revlex}
A monomial ideal $I$ is called an {\em almost revlex ideal},
if the following condition holds:
for each monomial $u$ in the minimal generating set of $I$,
every monomial $v$ with $\deg v = \deg u$ 
and $v >_{\text{revlex}} u$ belongs to $I$.

It is clear that if two almost revlex ideals have 
the same Hilbert function, then they are equal.
In addition, it is easy to see that
almost revlex ideals are Borel-fixed
\cite[Remark~11]{harima-wachi-comm-alg(was07072247)}.
\end{definition}
We write Hilbert functions of graded algebras 
as $h$-vectors $h = (h_0, h_1, \ldots, h_c)$.
A Hilbert function $h$ is said to be {\em unimodal},
if there exist an integer $a$ such that
$h_0 \le h_1 \le \cdots \le h_a \ge h_{a+1} \ge \cdots \ge h_c$.
A Hilbert function $h = (h_0, h_1, \ldots, h_c)$ ($h_c \ne 0$)
is said to be {\em symmetric},
if $h_i = h_{c-i}$ for every $i \ge 0$.
The {\em difference} $\Delta h$ of $h$ is defined by
\begin{align*}
  (\Delta h)_i &= \max\{h_{i} - h_{i-1}, 0\}
  \qquad (i = 0, 1, 2, \ldots),
\end{align*}
where $h_{-1}$ is defined as zero.
We define the $k$th difference $\Delta^k h$ 
by applying $\Delta$ to $h$ $k$-times. 
The following is a direct consequence of 
\cite[Corollary~27]{harima-wachi-comm-alg(was07072247)}
\begin{proposition}
\label{prop:gin-for-4-slp}
Let $I \subset \PPiv$ be a graded Artinian ideal 
whose quotient ring has the 2-SLP.
Suppose that the Hilbert function $h$ of $\PPiv/I$ is symmetric.
Then the generic initial ideal $\gin(I)$
is the unique almost revlex ideal for the Hilbert function $h$.
\qed
\end{proposition}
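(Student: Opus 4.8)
The plan is to reduce the statement to a purely combinatorial claim about $\gin(I)$ and then feed in the $2$-SLP hypothesis through the interplay between generic initial ideals and generic hyperplane sections. Since the almost revlex ideal attached to a given Hilbert function is unique (Definition~\ref{defn:almost-revlex}), and since $\gin(I)$ and $\PPiv/I$ share the Hilbert function $h$, it suffices to prove that the monomial ideal $\gin(I)$ is itself almost revlex; it will then automatically coincide with the unique almost revlex ideal for $h$. I would first record that, because $K$ has characteristic zero, $\gin(I)$ is strongly stable (Borel-fixed), and that, by the remark inside Definition~\ref{defn:k-slp}, the $2$-SLP of $\PPiv/I$ is the \emph{maximal} Lefschetz condition available in four variables: it is equivalent to the $4$-SLP, so we may work with a full chain of generic Lefschetz elements.

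The key tool I would establish is a dictionary between the $k$-SLP of $A=\PPiv/I$ and the $k$-SLP of the monomial algebra $\PPiv/\gin(I)$ taken with respect to the last variables $x_4, x_3, \dots$ as the chain of Lefschetz elements. In characteristic zero a generic linear form is a Lefschetz element, and after a generic change of coordinates it may be taken to be $x_4$; moreover, for the reverse lexicographic order the operations ``pass to $\gin$'' and ``cut by the last variable'' commute, so that reducing $\gin(I)$ modulo $x_4$ produces the $\gin$ of a generic hyperplane section of $I$. For the weak maps $\times \ell \colon A_i \to A_{i+1}$ this already yields a clean equivalence: full-rankness is equivalent to the cokernel $\PPiv/(I + \ell R)$ having its minimal possible dimension in each degree, and since that dimension is a Hilbert-function invariant it is unchanged by passing to $\gin$, where it becomes a count of monomials in $x_1,x_2,x_3$ lying outside $\gin(I)$. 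Iterating the hyperplane cut along $x_4$ and then $x_3$ lets me carry the two successive SLP conditions of the $2$-SLP over to $\PPiv/\gin(I)$.

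It then remains to show that a strongly stable monomial ideal $J$ with the symmetric Hilbert function $h$, for which $\PPiv/J$ carries the maximal SLP along the variable chain, must be almost revlex; the uniqueness in Definition~\ref{defn:almost-revlex} then finishes the proof. Here I would argue degree by degree, using that the successive full-rank multiplications along $x_4$ and $x_3$ force the new minimal generators appearing in each degree to be exactly the reverse-lexicographically largest monomials not already pushed into $J$ from lower degrees --- which is precisely the almost revlex condition --- while the symmetry of $h$ rigidifies the socle degrees so that this extremal filling must occur in every degree rather than only up to the middle. The step I expect to be the main obstacle is the faithful transfer of the \emph{strong} part of the property, namely the maps $\times \ell^{s}$ with $s>1$: these are not captured by a single linear hyperplane section, so the argument must analyze the colon ideals $\gin(I):x_4^{s}$ and match them against the strongly stable structure. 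It is exactly the combination of symmetry of $h$ with the maximality of the Lefschetz chain (the $2$-SLP being the top condition in four variables) that makes these constraints rigid enough to determine $\gin(I)$ uniquely. This mechanism is the content of \cite[Corollary~27]{harima-wachi-comm-alg(was07072247)}, of which the proposition is the four-variable specialization.
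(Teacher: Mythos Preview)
Your proposal is correct and in fact coincides with the paper's treatment: the paper gives no argument at all for this proposition, presenting it explicitly as a direct consequence of \cite[Corollary~27]{harima-wachi-comm-alg(was07072247)} and closing with a bare \qed, and you end at the same citation. Your sketch of the mechanism behind that corollary (transferring the $k$-SLP to $R/\gin(I)$ along the last variables via the revlex/hyperplane-section compatibility, then forcing almost-revlex from strong stability plus symmetry of $h$) is reasonable and more detailed than anything the paper itself provides here.
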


We conclude this section by an analogue of 
Wiebe's result \cite[Proposition~2.9]{MR2111103}.
\begin{proposition}
\label{prop:I-is-kSLP-if-in(I)-is-kSLP}
Let $I$ be a graded Artinian ideal of $R=\PPn$,
and let $1\leq k\leq n$. 
If $R/\In(I)$ has the $k$-SLP, then $R/I$ has the $k$-SLP. 
\end{proposition}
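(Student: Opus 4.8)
The plan is to mimic the standard semicontinuity argument used by Wiebe for the weak and strong Lefschetz properties, now iterated $k$ times to handle the full chain of SLP conditions defining the $k$-SLP. First I would recall the key fact that $R/I$ and $R/\In(I)$ have the same Hilbert function, and more importantly that a Gröbner-degeneration argument shows: if some specific multiplication map by a power of a linear form has full rank on $R/\In(I)$, then for a generic linear form the corresponding map has full rank on $R/I$. Concretely, fix a term order; the variable $x_n$ (the last variable, which is a ``generic'' candidate for a Lefschetz element in the revlex/Borel-fixed setting) plays the role of the Lefschetz element for $\In(I)$, and I would show that the property ``$\times \ell^s : (R/I)_i \to (R/I)_{i+s}$ has full rank'' is a Zariski-open condition on $\ell \in (R/I)_1$ that is nonempty because it holds after degenerating $I$ to $\In(I)$ with $\ell = x_n$. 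This is exactly Wiebe's Proposition 2.9 for $k=1$.

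Next I would set up the induction on $k$. The base case $k=1$ is the statement just described. For the inductive step, suppose the result holds for $k-1$ and that $R/\In(I)$ has the $k$-SLP, witnessed by linear forms $g_1, \dots, g_k$; since $\In(I)$ is Borel-fixed (as a generic initial ideal, or at least we only need it to be a monomial ideal that is $x_n$-favorable), the natural choice is $g_i = x_{n-i+1}$, so that $g_1 = x_n$ is a Lefschetz element for $R/\In(I)$ and $R/\In(I)/(x_n) \cong \bar R/\In(I)'$ over $\bar R = K[x_1,\dots,x_{n-1}]$ has the $(k-1)$-SLP. The point is that $\In(I) + (x_n)$, viewed in $\bar R$, is again a monomial (hence its own) initial ideal, and one checks that it equals $\In(I + (\ell))$ for generic $\ell$ — this is where one uses that $x_n$ is the smallest variable in the revlex order, so that quotienting by it commutes with taking initial ideals. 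Then by the $k=1$ case, $R/I$ has the SLP with a generic linear Lefschetz element $\ell_1$; and $R/(I + (\ell_1)) \cong \bar R / J$ where $\In(J)$ is (up to the generic change of coordinates) $\In(I) + (x_n)$ in $\bar R$, which has the $(k-1)$-SLP by hypothesis. Applying the inductive hypothesis in $n-1$ variables to $J$ gives that $\bar R/J$ has the $(k-1)$-SLP, and combining with the SLP of $R/I$ along $\ell_1$ yields the $k$-SLP for $R/I$.

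The main obstacle I anticipate is the compatibility of taking initial ideals with reduction modulo a linear form: one needs that for a \emph{generic} linear form $\ell$, $\In(I + (\ell))$ in the quotient ring $R/(\ell) \cong \bar R$ coincides (after the corresponding generic coordinate change) with $\In(I) + (x_n)$ in $\bar R$. The cleanest route is to work with $\ell = x_n$ directly on the degenerated side — since $\In(I)$ is a monomial ideal and $x_n$ is the last variable in the revlex order, $(\In(I) + (x_n))/(x_n)$ is obtained simply by deleting generators divisible by $x_n$, and this is visibly a monomial initial ideal — and then invoke upper-semicontinuity of the relevant ranks to transfer back to a generic $\ell$ for $I$ itself. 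One must be slightly careful that the generic linear form witnessing the SLP of $R/I$ in the $k=1$ step can be chosen to also make the reduction $R/(I+(\ell))$ have the expected initial ideal; this follows because both are generic (Zariski-open, nonempty) conditions, so their intersection is still nonempty. Once these genericity bookkeeping issues are handled, the induction runs without further difficulty.
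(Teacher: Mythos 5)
Your base case ($k=1$) is fine --- it is exactly Wiebe's Proposition~2.9 --- but the inductive step has two genuine gaps. First, you take the witnesses for the $k$-SLP of $R/\In(I)$ to be $x_n, x_{n-1}, \ldots$, justifying this by saying $\In(I)$ is Borel-fixed ``as a generic initial ideal''. But in this proposition $\In(I)$ is an ordinary initial ideal, not $\gin(I)$, and need not be Borel-fixed; the statement that $x_n$ is a Lefschetz element whenever any linear form is one is a theorem about Borel-fixed ideals (Wiebe's Lemma~2.7) and fails for arbitrary monomial ideals. Second, the compatibility you yourself flag as the ``main obstacle'' does not hold in the form you need: for revlex one does have $\In(I+(x_n)) = \In(I)+(x_n)$, but the Lefschetz element $\ell_1$ of $R/I$ is a \emph{generic} linear form, and computing the initial ideal of $J=(I+(\ell_1))/(\ell_1)$ requires a coordinate change sending $\ell_1$ to $x_n$, which turns $\In(I)$ into $\gin(I)$. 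So what you can identify is $(\gin(I)+(x_n))/(x_n)$, not $(\In(I)+(g_1))/(g_1)$, and to know that the corresponding quotient has the $(k-1)$-SLP you would essentially need to already know that $R/I$ (equivalently $R/\gin(I)$) has the $k$-SLP --- which is the conclusion. The appeal to ``upper-semicontinuity of the relevant ranks'' is precisely the step that needs an actual argument, and as set up the induction is circular.

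The paper's proof avoids both problems by never descending to $n-1$ variables and never taking initial ideals of the successive quotients. It keeps everything in $R$: for generic linear forms $g_1,\ldots,g_i$ it uses the inequality $\HH_{R/(I+(g_1,\ldots,g_{i-1},g_i^s))}(t) \le \HH_{R/(\In(I)+(g_1,\ldots,g_{i-1},g_i^s))}(t)$ extracted from the proof of Wiebe's Proposition~2.9, observes that the right-hand side equals the minimal possible value $b_t=\max\{h_t-h_{t-s},0\}$ because $R/(\In(I)+(g_1,\ldots,g_{i-1}))$ has the SLP, and that the left-hand side is always at least $b_t$ for rank reasons; the resulting equality is exactly the full-rank condition at stage $i$, and the case $s=1$ propagates the equality of Hilbert functions to stage $i+1$. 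If you want to salvage your outline, this Hilbert-function comparison is the mechanism to substitute for the initial-ideal compatibility you were trying to establish.
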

\begin{proof}
Let $\HH_A(t)$ denotes the Hilbert function of a graded algebra $A$.
From the proof of \cite[Proposition~2.9]{MR2111103}, 
we have  
\begin{align}
  \label{eq:conca2}
  \HH_{R/(I+(g_1,\ldots,g_{i-1},g_i^s))}(t) \leq 
  \HH_{R/(\In(I)+(g_1,\ldots,g_{i-1},g_i^s))}(t)
\end{align}
for generic linear forms $g_1,\ldots,g_i$, 
$s \geq 1$ and all $t\geq 0$. 
In order to prove our claim, 
it is enough to show that 
the Hilbert function of $R/(I+(g_1,\ldots,g_{i-1},g_i^s))$ 
coincides with that of $R/(\In(I)+(g_1,\ldots,g_{i-1},g_i^s))$ 
for every $i=1,2,\ldots,k$ 
under the assumption that the Hilbert function of
 $R/(I+(g_1,\ldots,g_{i-1}))$ 
is equal to that of $R/(\In(I)+(g_1,\ldots,g_{i-1}))$. 
Set $h = \HH_{R/(I+(g_1,\ldots,g_{i-1}))}$.
By our assumption, 
$R/(\In(I)+(g_1,\ldots,g_{i-1}))$ has the SLP.  
Hence, it follows that 
the Hilbert function of $R/(\In(I)+(g_1,\ldots,g_{i-1},g_i^s))$ 
is equal to the sequence $(b_t)_{t \ge 0}$:
\begin{align*}
  b_t = \max\{ h_t - h_{t-s}, 0 \},
\end{align*}
where $h_t = 0$ for $t < 0$.
Furthermore one can easily check that 
\begin{align*}
  b_t \leq \HH_{R/(I+(g_1,\ldots,g_{i-1},g_i^s))}(t)
\end{align*}
for all $t \ge 0$.
Hence it follows from (\ref{eq:conca2}) that 
\begin{align*}
  \HH_{R/(I+(g_1,\ldots,g_{i-1},g_i^s))}(t) 
  = \HH_{R/(\In(I)+(g_1,\ldots,g_{i-1},g_i^s))}(t)
\end{align*}
for all $t \geq 0$. 
\end{proof}

\section{Main theorem}
\label{sec:main-theorem}

In this section we prove the main theorem 
(Theorem~\ref{thm:gin-of-4var-ci}).
For two graded Artinian $K$-algebras $A$ and $B$ having the SLP,
$A \otimes_K B$ also has the SLP
if their Hilbert functions are symmetric \cite{MR951211}.
But this is not the case unless both Hilbert functions are symmetric
(see \cite[Example~5]{MR2033004}, e.g.).
The following lemma gives a necessary and sufficient condition
for $A \otimes_K K[y]/(y^2)$ to have the SLP, 
when the Hilbert function of $A$ is not necessarily symmetric.

\begin{lemma}
\label{lem:(51.1)}
Let $A$ be a graded Artinian $K$-algebra
having the SLP,
and $B = K[y]/(y^2)$.
The tensor product $A \otimes_K B$ has the SLP,
if and only if the Hilbert function $h = h_A$ of $A$
satisfies the following two conditions:
\begin{quote}
\textsf{(C1)}
For any $h_i < h_{i+1}$,
there exist at most one $j$ such that $h_i < h_j < h_{i+1}$.
\par\noindent
\textsf{(C2)}
For any $h_i > h_{i+1}$,
there exist at most one $j$ such that $h_i > h_j > h_{i+1}$.
\end{quote}
\end{lemma}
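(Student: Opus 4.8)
The plan is to produce an explicit candidate Lefschetz element for $A\otimes_K B$, namely $\ell+y$ with $\ell\in A_1$ a Lefschetz element of $A$, and to compute the ranks of the multiplication maps $\times(\ell+y)^s$ by decomposing $A$ along its Jordan (``string'') decomposition with respect to $\ell$.

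First I would set up the reduction. If $A=\bigoplus_{i=0}^c A_i$, then the top degree of $A\otimes_K B$ is $c+1$ while $\ell^{c+1}=0$ in $A$, so no element of $A_1$ is a Lefschetz element of $A\otimes_K B$; since the Lefschetz locus of $A\otimes_K B$ is open and (in the ``only if'' direction) nonempty, while the Lefschetz locus of $A$ is dense in $A_1$, a standard genericity argument shows that \emph{if} $A\otimes_K B$ has the SLP then it has a Lefschetz element of the form $\ell+\lambda y$ with $\lambda\ne0$ and $\ell$ Lefschetz for $A$; rescaling $y$ we may take $\lambda=1$. Thus in both directions it suffices to decide whether $\ell+y$, for a fixed Lefschetz element $\ell$ of $A$, is a Lefschetz element of $A\otimes_K B$. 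Fixing such $\ell$, the SLP of $A$ gives a decomposition $A\cong\bigoplus_\alpha S_\alpha$ of graded $K[\ell]$-modules into cyclic modules (strings), with $S_\alpha$ supported on a degree interval $[p_\alpha,q_\alpha]$; moreover the SLP of $A$ forces the multiset $\{[p_\alpha,q_\alpha]\}$ to be the unique \emph{nested} one for $h=h_A$, namely the $k$-th string is $[\,\min\{v:h_v\ge k\},\ \max\{v:h_v\ge k\}\,]$ for $k=1,\dots,\max_i h_i$. (Here $\#\{\alpha:p_\alpha=p\}=(\Delta h)_p$, and ``nestedness'' --- no two strings cross --- is exactly the identity $\operatorname{rank}(\times\ell^t\colon A_j\to A_{j+t})=\min(h_j,h_{j+t})$.)

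Next, tensoring respects this: $A\otimes_K B\cong\bigoplus_\alpha(S_\alpha\otimes_K B)$ as modules over $K[\ell,y]/(y^2)$, and $S_\alpha\otimes_K B$ is isomorphic, up to a degree shift, to $\PPii/(x_1^{m_\alpha},x_2^2)$ as a module over itself, where $m_\alpha=q_\alpha-p_\alpha+1$ and $\ell,y$ correspond to $x_1,x_2$. By \cite[Theorem~4.4]{MR1970804} this two-variable algebra has the SLP, and a short computation (or standard facts about monomial complete intersections) shows $x_1+x_2$ is a Lefschetz element of it; hence $\times(\ell+y)^s$ has full rank on every graded component of every $S_\alpha\otimes_K B$. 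Writing $f_\alpha(i)=\dim(S_\alpha\otimes_K B)_i$ (which equals $2$ for $p_\alpha<i\le q_\alpha$, equals $1$ for $i\in\{p_\alpha,q_\alpha+1\}$, and is $0$ otherwise), we obtain
\[
  \operatorname{rank}\bigl(\times(\ell+y)^s\colon(A\otimes_K B)_i\to(A\otimes_K B)_{i+s}\bigr)=\sum_\alpha\min\bigl(f_\alpha(i),f_\alpha(i+s)\bigr).
\]
Since $\sum_\alpha f_\alpha$ is the Hilbert function of $A\otimes_K B$, the element $\ell+y$ is Lefschetz --- equivalently $A\otimes_K B$ has the SLP --- iff for all $i$ and all $s\ge1$ this sum of minima equals $\min\bigl(\sum_\alpha f_\alpha(i),\sum_\alpha f_\alpha(i+s)\bigr)$, i.e.\ iff there is no \emph{crossing}: no pair of strings $\alpha,\beta$ with $f_\alpha(i)>f_\alpha(i+s)$ and $f_\beta(i)<f_\beta(i+s)$.

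It remains to unwind the no-crossing condition. A short case analysis on the shape of $f_\alpha$ shows that $f_\alpha(i)>f_\alpha(i+s)$ holds exactly when $p_\alpha\le i\le q_\alpha+1\le i+s$ and $(p_\alpha,q_\alpha+1)\ne(i,i+s)$, and $f_\alpha(i)<f_\alpha(i+s)$ exactly when $i\le p_\alpha\le i+s\le q_\alpha+1$ and $(p_\alpha,q_\alpha+1)\ne(i,i+s)$. Because the strings are nested, a crossing pair $\{\alpha,\beta\}$ must have one string contained in the other, and feeding this into the two inequalities forces either $q_\alpha=q_\beta$ with $|p_\alpha-p_\beta|\ge2$, or $p_\alpha=p_\beta$ with $|q_\alpha-q_\beta|\ge2$; conversely each such pair produces a crossing for a suitable $(i,s)$. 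Finally, the strings sharing a right endpoint $q$ are exactly those indexed by $h_{q+1}<k\le h_q$ (so they occur precisely at a descent $h_q>h_{q+1}$ of $h$, with the convention $h_v=0$ for $v\notin[0,c]$), and their left endpoints $\min\{v:h_v\ge k\}$ sweep out a set of cardinality $\#\{v:h_{q+1}<h_v<h_q\}$; hence ``left endpoints differ by at most $1$ at every descent'' is precisely \textsf{(C2)}, and the mirror-image analysis of common left endpoints gives \textsf{(C1)}. Assembling these equivalences proves the lemma. The step I expect to require the most care is this last unwinding --- the case analysis for $f_\alpha(i)>f_\alpha(i+s)$ and the matching of the endpoint conditions to \textsf{(C1)} and \textsf{(C2)} --- together with making the opening reduction to the element $\ell+y$ fully rigorous.
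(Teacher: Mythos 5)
Your proof is correct and follows essentially the same route as the paper: decompose $A$ into Jordan strings with respect to a Lefschetz element, tensor with $B$, characterize the SLP of $A\otimes_K B$ by a non-crossing condition on the resulting pieces, and translate that condition into \textsf{(C1)} and \textsf{(C2)}. The only cosmetic differences are that the paper first splits each $S_\alpha\otimes_K B$ into two $K[z]$-strings (via the cited decomposition of $K[x]/(x^a)\otimes_K K[y]/(y^2)$) before checking pairwise non-crossing, whereas you keep each block whole with Hilbert function $(1,2,\ldots,2,1)$ and use the sum-of-minima criterion, and that you are more explicit than the paper about why it suffices to test the single element $\ell+y$.
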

\begin{proof}
First we recall that, 
given a graded Artinian $K$-algebra $A$ 
and a linear form $g\in A$, 
we may consider $A$ a $K[x]$-module via $x\ast a=ga$. 
Then, 
by the structure theorem of finitely generated module over PID, 
$A$ decomposes uniquely as direct sum of 
$K[x]/(x^{a_i})[-b_i]$ 
where the shift $[-b_i]$ indicates that 
the generators sits in degree $b_i$. 
So the Hilbert function of $A$ is 
$\sum \lambda^{b_i}(1-\lambda^{a_i})/(1-\lambda)$. 
Which $b_i$ actually occur is given by the Hilbert function of $A/(g)$ 
which is $\sum \lambda^{b_i}$. 
Call the pairs $(b_i, a_i)$ the basic invariants of $(A,g)$ 
and we can easily prove that 
$y$ is a SL element for $A$ if and only if 
the basic invariants have the following properties: 
\begin{gather}
\label{eq:cond-slp-by-basic-inv}
\text{if $(b_i, a_i)$ and $(b_j, a_j)$ are
  basic invariants with $b_i<b_j$ then $b_i+a_i\geq b_j+a_j$.}
\end{gather}

Next we consider the tensor product 
$C=K[x]/(x^a)\otimes_K B \simeq K[x,y]/(x^a, y^2)$ 
as a $K[z]$-module, 
where the action of $K[z]$ induced on $C$ 
is given by $z\ast f(x,y)=(x+y)f(x,y)$. 
Then, thanks to \cite[Proposition 8]{MR2033004}, 
the tensor product $D=K[x]/(x^{a_i})[-b_i]\otimes_K B$ 
decomposes into two modules, 
$D=K[z]/(z^{a_i+1})[-b_i]\oplus K[z]/(z^{a_i-1})[-(b_i+1)]$. 
If $a_i=1$, 
then the second component does not appear in $D$. 

Thus $A\otimes_K B$ has the SLP, 
if and only if 
every combination of two basic invariants of $A\otimes_K B$
satisfies Condition~(\ref{eq:cond-slp-by-basic-inv}). 

Furthermore we consider the following conditions: 
\begin{quote}
\textsf{(C1)$'$}\;
If $b_i=b_j$ then $\mid a_i-a_j\mid\leq 1$. 
\par\noindent
\textsf{(C2)$'$}\;
If $b_i+a_i=b_j+a_j$ then $\mid b_i-b_j\mid\leq 1$. 
\end{quote}
Note that these conditions are equivalent to our conditions, 
that is, \textsf{(C1)} is equivalent to \textsf{(C1)$'$},
and \textsf{(C2)} is equivalent to \textsf{(C2)$'$}. 

Suppose that $A\otimes_K B$ has the SLP. 
Hence the basic invariants $\{(b_i, a_i+1), (b_i+1, a_i-1)\}$ 
of $A\otimes_K B$ satisfy
Condition~(\ref{eq:cond-slp-by-basic-inv}).
First assume that there exist $i$ and $j$ 
such that $b_i=b_j$ and $a_i\geq a_j+2$. 
Then $b_i+1 > b_j$ and $(b_i+1)+(a_i-1)>b_j+(a_j+1)$. 
This means that 
two basic invariants $(b_i+1,a_i-1)$ and $(b_j, a_j+1)$ do not satisfy
Condition~(\ref{eq:cond-slp-by-basic-inv}).
Next assume that there exist $i$ and $j$ 
such that $b_i+a_i=b_j+a_j$ and $b_i+2\leq b_j$. 
Then $b_i+1 < b_j$ and $(b_i+1)+(a_i-1)<b_j+(a_j+1)$. 
This also means that 
two basic invariants $(b_i+1,a_i-1)$ and $(b_j, a_j+1)$ do not satisfy
Condition~(\ref{eq:cond-slp-by-basic-inv}). 
Thus the Hilbert function of $A$ satisfies 
Conditions \textsf{(C1)} and \textsf{(C2)}. 

Conversely suppose that the basic invariants
$(b_i, a_i)$ satisfy Conditions \textsf{(C1)$'$} and \textsf{(C2)$'$}. 
We can check that the basic invariants
$\{(b_i, a_i+1), (b_i+1, a_i-1)\}$ of $A \otimes_K B$ satisfy
Condition~(\ref{eq:cond-slp-by-basic-inv}) as follows.
Take two basic invariants $(b_i, a_i+1)$ and $(b_j+1, a_j-1)$
for example.
If $b_i < b_j+1$, then there are two possibilities.
(i) When $b_i = b_j$,
we have $|a_i-a_j| \le 1$ from \textsf{(C1)$'$}.
Hence
$b_i+(a_i+1) = (b_j+1)+(a_j-1) + (a_i-a_j+1) \ge (b_j+1)+(a_j-1)$.
(ii) When $b_i < b_j$,
we have $b_i+a_i \ge b_j+a_j$ 
from Condition~(\ref{eq:cond-slp-by-basic-inv}) for $A$.
Hence $b_i+(a_i+1) > (b_i+1)+(a_j-1)$,
and Condition~(\ref{eq:cond-slp-by-basic-inv}) 
for $A \otimes_K B$ is satisfied.
If $b_i > b_j+1$, then we have $b_i+a_i \ne b_j+a_j$
from the contraposition of \textsf{(C2)$'$}
and $b_i+a_i \le b_j+a_j$ 
from Condition~(\ref{eq:cond-slp-by-basic-inv}) for $A$.
Hence $b_i+(a_i+1) \le (b_j+1)+(a_j-1)$.
Thus Condition~(\ref{eq:cond-slp-by-basic-inv})
for $A \otimes_K B$ is satisfied.
Calculations are similar for other choices 
$(b_i,a_i+1)$ and $(b_j,a_j+1)$, or
$(b_i+1,a_i-1)$ and $(b_j+1,a_j-1)$
of basic invariants,
and thus $A \otimes_K B$ has the SLP.
\end{proof}
The following two lemmas give sufficient conditions for 
the tensor product $A \otimes_K K[y]/(y^2)$
to have the $k$-SLP.
In Lemma~\ref{lem:(53.1)},
$A$ is a quotient ring by a Borel-fixed ideal
having the $k$-SLP.
In Lemma~\ref{lem:(53.2)},
$A$ is any graded algebra having the $k$-SLP.
\begin{lemma}
\label{lem:(53.1)}
Let $A = R/I$ be a graded Artinian $K$-algebra having the $k$-SLP,
where $R = \PPn$, and $I$ is a Borel-fixed ideal of $R$.
Let $h_A$ be the Hilbert function of $A$.
Let $B = K[y]/(y^2)$.

Suppose that every $s$-th difference $\Delta^s h_A$
($0 \le s \le k-1$) satisfies 
Conditions \textsf{(C1)} and \textsf{(C2)} of
Lemma~\ref{lem:(51.1)}.
Suppose also that every $s$-th difference $\Delta^s h_A$
($0 \le s \le k-2$) satisfies the following condition:
\begin{quote}
\textsf{(C3)}
For $h = (h_0, h_1, \ldots, h_c)$,
there are two or more $i$ such that $h_i = \max_j\{h_j\}$,
or there is only one $i$ such that $h_i = \max_j\{h_j\}$,
and $h_{i-1} \ge h_{i+1}$.
\end{quote}
Then the tensor product $A \otimes_K B$ has the $k$-SLP.
\end{lemma}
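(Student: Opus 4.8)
The plan is to exhibit an explicit $k$-SLP chain for $A\otimes_K B$, namely
\[
  x_n+y,\quad x_{n-1},\quad x_{n-2},\quad \ldots,\quad x_{n-k+1},
\]
and to check the SLP conditions one stage at a time: Lemma~\ref{lem:(51.1)} handles the first stage, and the remaining stages are verified by induction on the stage (equivalently, on $k$).

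Since $I$ is Borel-fixed, the last variables of $R$ are as good as generic linear forms for Lefschetz questions: the $k$-SLP of $A$ is witnessed by $x_n,x_{n-1},\ldots,x_{n-k+1}$, and in particular $(A,x_n)$ has the SLP. Feeding the $s=0$ instance of \textsf{(C1)}, \textsf{(C2)} into Lemma~\ref{lem:(51.1)} gives that $A\otimes_K B$ has the SLP, with $x_n+y$ a Lefschetz element (this is exactly what the proof of that lemma produces). Eliminating $y$ via $y\mapsto-x_n$ identifies, for $1\le j\le k$,
\[
  (A\otimes_K B)\big/(x_n+y,\,x_{n-1},\ldots,x_{n-j+1})\;\cong\;A\big/(x_n^2,\,x_{n-1},\ldots,x_{n-j+1})\;=:\;Q_j
\]
(the list $x_{n-1},\ldots,x_{n-j+1}$ having $j-1$ entries), so what remains is to prove that $(Q_j,x_{n-j})$ has the SLP for $j=1,\ldots,k-1$.

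For this, use the short exact sequence of graded modules
\[
  0\;\longrightarrow\;x_n Q_j\;\longrightarrow\;Q_j\;\longrightarrow\;Q_j/(x_n)\;\longrightarrow\;0 .
\]
The quotient $Q_j/(x_n)=A/(x_n,x_{n-1},\ldots,x_{n-j+1})$ is $A$ modulo the first $j$ members of its chain, hence again a quotient by a strongly stable ideal, with Hilbert function $\Delta^j h_A$, and (as $k-j\ge 1$) it has the SLP with Lefschetz element $x_{n-j}$. Since $x_n^2=0$ in $Q_j$ we have $(x_n)\subseteq\operatorname{Ann}_{Q_j}(x_n)$ and $x_n Q_j\cong\bigl(Q_j/\operatorname{Ann}_{Q_j}(x_n)\bigr)[-1]$, a shifted quotient of $Q_j/(x_n)$ which is again strongly stable, and the $k$-SLP of $A$ equips it too with the SLP along $x_{n-j}$. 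One then transfers full rank of $\times x_{n-j}^{s}$ from the two outer terms to $Q_j$ itself: the Hilbert functions of the outer terms are assembled from the iterated differences $\Delta^0 h_A,\ldots,\Delta^{k-1}h_A$, and \textsf{(C1)}, \textsf{(C2)} force these into the rigid ``at most one value strictly between'' shape that Lemma~\ref{lem:(51.1)} already exploits, while \textsf{(C3)} pins down their behaviour at the peak, which is the only place where the Hilbert function of $Q_j$ can differ from that of $\bigl(Q_j/(x_n)\bigr)\otimes_K B$. Together these preclude any cancellation of ranks between submodule and quotient, so $(Q_j,x_{n-j})$ has the SLP, and running $j=1,\ldots,k-1$ finishes the chain.

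The crux, and the step I expect to be the main obstacle, is this last transfer: full rank of a generic multiplication map does not descend along a short exact sequence for free, so one must rule out the sub- and quotient-modules conspiring to drop a rank — and that is precisely the purpose of conditions \textsf{(C1)}--\textsf{(C3)} on the iterated differences of $h_A$. The Borel-fixed hypothesis enters by keeping $Q_j/(x_n)$, $Q_j/\operatorname{Ann}_{Q_j}(x_n)$ and every auxiliary algebra a quotient by a strongly stable ideal, so that the next variable $x_{n-j}$ remains a legitimate (generic-type) Lefschetz element at each stage; a secondary point requiring care is the opening claim that, for strongly stable $I$, the tail $x_n,x_{n-1},\ldots$ realizes the $k$-SLP of $R/I$.
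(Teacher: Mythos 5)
There is a genuine gap, and it sits exactly where you flag it: the transfer of full rank across the short exact sequence $0\to x_nQ_j\to Q_j\to Q_j/(x_n)\to 0$. If $\times x_{n-j}^{s}$ is injective-but-not-surjective on one outer term and surjective-but-not-injective on the other in the same degree (which is precisely what happens near the peak of the Hilbert function), full rank on both outer terms gives no conclusion for the middle term. You assert that \textsf{(C1)}--\textsf{(C3)} ``preclude any cancellation of ranks,'' but you give no mechanism for this, and this claim is the entire content of the lemma; nothing else in the argument is nontrivial. A secondary unproved assertion compounds this: that $x_nQ_j\cong\bigl(Q_j/\operatorname{Ann}_{Q_j}(x_n)\bigr)[-1]$ has the SLP along $x_{n-j}$. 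This module is a quotient by (essentially) the colon ideal $(I:x_n)$ plus the killed variables, and the $k$-SLP of $A$ does not hand you the SLP of that colon quotient without an argument.

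The paper avoids the exact-sequence dilemma entirely by a structural identification rather than a rank-chasing one. After reducing to $A\otimes_K B/(x_n+y)\simeq A/(x_n^2)$ (the same reduction you make), it splits into the two cases of \textsf{(C3)}: if the maximum of $h_A$ is attained at least twice, then for every standard monomial $u$ prime to $x_n$ the monomial $ux_n$ is again standard, giving an algebra isomorphism $A/(x_n^2)\simeq A/(x_n)\otimes_K K[z]/(z^2)$; if the maximum is attained once (with $h_{i-1}\ge h_{i+1}$), then $A/(x_n^2)$ is that tensor product truncated above degree $i$, and truncating the socle degree preserves the $k$-SLP. In either case the problem becomes the lemma itself for $A/(x_n)$, whose Hilbert function is $\Delta h_A$ --- which is exactly why the hypotheses are phrased in terms of iterated differences --- and induction on $k$ closes the argument, with Lemma~\ref{lem:(51.1)} as the base case. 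If you want to salvage your route, you would need to replace the hand-wave at the SES step with something equivalent to this case analysis; as written, the proof is not complete.
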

\begin{proof}
We prove the lemma by induction on $k$.
For $k=1$, the lemma follows from Lemma~\ref{lem:(51.1)}.

Let $k>1$, and assume that the lemma holds up to $k-1$.
Since $I$ is Borel-fixed, $x_n$ is a Lefschetz element of $A$
\cite[Lemma~2.7]{MR2111103}.
Hence by the assumption of induction, $A \otimes_K B$ has the SLP,
and $x_n + y \in A \otimes_K B \cong A[y]/(y^2)$ is 
a Lefschetz element \cite{MR951211}.
Thus it suffices to show that
$A \otimes_K B / (x_n+y)$ has the $(k-1)$-SLP.
We have
\begin{align*}
A \otimes_K B / (x_n + y)
&\simeq
K[x_1,x_2,\ldots,x_n,y] / (I) + (x_n+y) + (y^2)
\\&\simeq
\PPn / I+(x_n^2)
\\&\simeq
A/(x_n^2).
\end{align*}

[Case 1. There are two or more $i$ such that $h_i = \max_j\{h_j\}$]
In this case,
if a monomial $u \in R$ not divisible by $x_n$
is a {\em standard monomial} (i.e. a monomial not belonging to $I$),
then $u x_n$ is also a standard monomial,
since $x_n$ is a Lefschetz element.
Therefore we have an algebra isomorphism
$A/(x_n^2) \simeq A/(x_n) \otimes_K K[z]/(z^2)$,
since $A$ is a quotient of a Borel-fixed ideal.
Here $A/(x_n)$ has the $(k-1)$-SLP, 
and has the Hilbert function $\Delta h_A$.
Therefore it follows from the assumption of induction that
$A/(x_n^2) \simeq A/(x_n) \otimes_K K[z]/(z^2)$ has the $(k-1)$-SLP.

[Case 2.
There is only one $i$ such that $h_i = \max_j\{h_j\}$,
and $h_{i-1} \ge h_{i+1}$] 
In this case,
for a standard monomial $u \in R$ not divisible by $x_n$,
$u x_n$ is also standard if $\deg(u) < i$,
and is not standard if $\deg(u) = i$.
Therefore we have an algebra isomorphism
\begin{align*}
  A/(x_n^2) \simeq
  \left[ A/(x_n) \otimes_K K[z]/(z^2) \right] / \frakm^{i+1},
\end{align*}
where $\frakm$ is the graded maximal ideal of
$A/(x_n) \otimes_K K[z]/(z^2)$.
Namely $A/(x_n^2)$ is isomorphic to 
the algebra obtained by dropping the homogeneous component
of the socle degree of $A/(x_n) \otimes_K K[z]/(z^2)$.
In general, for an algebra having the $k$-SLP,
the algebra obtained by dropping the homogeneous component of
the socle degree again has the $k$-SLP.
Hence $A/(x_n^2)$ has the $(k-1)$-SLP.

In both cases we have proved that $A/(x_n^2)$ has the $(k-1)$-SLP,
and by induction we have proved the lemma.
\end{proof}
\begin{lemma}
\label{lem:(53.2)}
Let $A$ be a graded Artinian $K$-algebra having the $k$-SLP,
and $h_A$ its Hilbert function.
Let $B = K[y]/(y^2)$.
Suppose that every $s$-th difference $\Delta^s h_A$
($0 \le s \le k-1$) satisfies 
Conditions \textsf{(C1)} and \textsf{(C2)} of
Lemma~\ref{lem:(51.1)},
and suppose also that every $s$-th difference $\Delta^s h_A$
($0 \le s \le k-2$) satisfies
Condition \textsf{(C3)} of 
Lemma~\ref{lem:(53.1)}.
Then the tensor product $A \otimes_K B$ has the $k$-SLP.
\end{lemma}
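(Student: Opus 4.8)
The plan is to reduce to the Borel-fixed case already handled in Lemma~\ref{lem:(53.1)} and then to transport the conclusion back to $A$ by means of Proposition~\ref{prop:I-is-kSLP-if-in(I)-is-kSLP}. Write $A = R/I$ with $R = \PPn$, and after a generic change of coordinates assume $I$ is in generic position, so that $\In(I) = \gin(I)$. The first, and main, step will be to verify that $R/\gin(I)$ again has the $k$-SLP. Granting this, the rest is formal: $\gin(I)$ is Borel-fixed and $R/\gin(I)$ has the same Hilbert function $h_A$ as $A$, so the hypotheses on the differences $\Delta^s h_A$ are inherited, and Lemma~\ref{lem:(53.1)} gives that $(R/\gin(I)) \otimes_K B$ has the $k$-SLP. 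Working in $R' = R[y]$, with the reverse lexicographic order extending that of $R$ and $y$ as the smallest variable, we have $A \otimes_K B = R'/(IR' + (y^2))$, and since a Gröbner basis of $I$ together with $y^2$ is a Gröbner basis of $IR' + (y^2)$, also $\In(IR' + (y^2)) = \gin(I)R' + (y^2)$. Hence $R'/\In(IR'+(y^2)) = (R/\gin(I)) \otimes_K B$ has the $k$-SLP, and Proposition~\ref{prop:I-is-kSLP-if-in(I)-is-kSLP} gives the $k$-SLP of $A \otimes_K B$.

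The heart of the argument is therefore the implication: if $R/I$ has the $k$-SLP, then so does $R/\gin(I)$. I would prove this by induction on $k$, with the number of variables decreasing along the way, peeling off a generic hyperplane. For $k > 1$, picking a generic linear form $\ell = g_1$ that realizes the first step of a $k$-SLP flag of $R/I$, the quotient $R/(I + (\ell))$ has the $(k-1)$-SLP; since setting the last variable equal to zero in $\gin(I)$ produces $\gin\!\big((I+(\ell))/(\ell)\big)$, the inductive hypothesis shows that $R/(\gin(I) + (x_n))$ has the $(k-1)$-SLP, and it only remains to check that $x_n$ is a strong Lefschetz element of $R/\gin(I)$, which is the case $k=1$. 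For $k=1$ I would exploit the identity $\In(J + (\ell^s)) = \In(J) + (\ell^s)$, valid for every homogeneous ideal $J$ when $\ell$ is the smallest variable and $s \ge 1$: for the reverse lexicographic order, the leading monomial of each element of a Gröbner basis of $J$ carries the smallest $\ell$-exponent among the monomials occurring in that element, so every $S$-polynomial formed with $\ell^s$ is already divisible by $\ell^s$ and reduces to zero. Applied in coordinates generic for $I$, this yields $\gin(I + (\ell^s)) = \gin(I) + (x_n^s)$ for generic $\ell$, whence
\[
  \HH_{R/(\gin(I) + (x_n^s))} = \HH_{R/\gin(I + (\ell^s))} = \HH_{R/(I + (\ell^s))} ,
\]
and the right-hand side equals the minimal value $\max\{h_t - h_{t-s}, 0\}$ precisely because $R/I$ has the SLP; therefore every map $\times x_n^s$ on $R/\gin(I)$ is of full rank, i.e.\ $R/\gin(I)$ has the SLP.

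The step I expect to cost the most care is the bookkeeping in this induction — in particular arranging a single coordinate system generic at once for $I$ and for all the ideals $I + (\ell^s)$, and reconciling this with the ``$\gin$ commutes with a generic hyperplane section'' input used for $k > 1$. Everything past that point reduces formally to Lemmas~\ref{lem:(51.1)} and \ref{lem:(53.1)} and Proposition~\ref{prop:I-is-kSLP-if-in(I)-is-kSLP}; and should the implication \emph{$R/I$ has the $k$-SLP $\Longrightarrow R/\gin(I)$ has the $k$-SLP} be available in the literature, the proof collapses to the formal portion of the first paragraph.
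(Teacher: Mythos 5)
Your proposal is correct and follows essentially the same route as the paper: pass to $\gin(I)$, apply Lemma~\ref{lem:(53.1)} there, identify $(R/\gin(I))\otimes_K B$ with the initial-ideal quotient of $I+(y^2)$ in $R[y]$ (using that adjoining $y^2$ with $y$ smallest commutes with taking initial ideals in revlex), and transport back via Proposition~\ref{prop:I-is-kSLP-if-in(I)-is-kSLP}. The only divergence is that the implication ``$R/I$ has the $k$-SLP $\Rightarrow R/\gin(I)$ has the $k$-SLP,'' which you prove by induction on a generic hyperplane section, is simply cited by the paper (Proposition~18 of the authors' earlier work), so your argument does indeed collapse to the formal portion of your first paragraph, exactly as you anticipated.
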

\begin{proof}
Let $A = R/I$, where $R = \PPn$, 
and $I$ a graded ideal of $R$.
Let $g \in GL_n(K)$ be an element 
for which $\In(g I) = \gin(I)$,
and let $\tilde{g} \in GL_{n+1}(K)$ be the element
given by embedding $g$ into first $n$ dimensions.
Then we have
\begin{align}
\label{eq:lem(53.2)-proof}
\nonumber
R/\gin(I) \otimes_K B
& =
R/\In(g I) \otimes_K B
\\&\simeq \nonumber
R[y] / (\In(g I) + (y^2))
\\&= \nonumber
R[y] / \In(g I + (y^2))
\\&\simeq 
K[x_1,x_2,\ldots,x_n,y]/\In(\tilde{g}(I+(y^2))),
\end{align}
by use of \cite[Proposition~15.15]{MR1322960} for example.
Here $R/\gin(I)$ has the $k$-SLP,
since $R/I$ has the $k$-SLP 
if and only if $R/\gin(I)$ has the $k$-SLP
\cite[Proposition~18]{harima-wachi-comm-alg(was07072247)}.
It follows from Lemma~\ref{lem:(53.1)} that
$R/\gin(I) \otimes_K B$ has the $k$-SLP.
Hence $K[x_1,x_2,\ldots,x_n,y]/\tilde{g}(I+(y^2))$
has the $k$-SLP by Proposition~\ref{prop:I-is-kSLP-if-in(I)-is-kSLP}.
Thus $A \otimes_K B \simeq K[x_1,x_2,\ldots,x_n,y]/\tilde{g}(I+(y^2))$
has the $k$-SLP.
\end{proof}
We need the following property on Hilbert functions
of monomial complete intersections of three variables
for the proof of the main theorem.
\begin{lemma}
\label{lem:harima-sakaki}
Let $a$, $b$ and $c$ be positive integers.
Define the $h$-vector $h = (h_0, h_1, \ldots)$ by 
\begin{align}
\label{eq:lem:harima-sakaki}
\sum_{j=0}^{a+b+c-3} h_j t^j =
(1+t+\cdots+t^{a-1}) (1+t+\cdots+t^{b-1}) (1+t+\cdots+t^{c-1}).
\end{align}
Then its difference $h' = \Delta h = (h'_0, h'_1, \ldots, h'_j, \ldots)$ is
a piecewise linear function in $j$,
and the coefficient of $j$ is at least $-2$ in each linear piece.
\end{lemma}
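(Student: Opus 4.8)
The plan is to reduce the whole statement to a property of the second difference $\Delta^2 h$, which I expect to be a bounded step function; both assertions of the lemma will then drop out at once. First I would make explicit the generating-function content of~(\ref{eq:lem:harima-sakaki}): with $H(t)=\sum_{j\ge 0}h_j t^j=(1-t^a)(1-t^b)(1-t^c)/(1-t)^3$ and $\sum_j (\Delta h)_j t^j=(1-t)H(t)$ (recall $h_{-1}=0$), multiplying by $(1-t)$ once more gives
\[
  \sum_j (\Delta^2 h)_j\, t^j = (1-t)^2 H(t) = \frac{(1-t^a)(1-t^b)(1-t^c)}{1-t} = (1+t+\cdots+t^{a-1})(1-t^b)(1-t^c).
\]
Expanding $(1-t^b)(1-t^c)=1-t^b-t^c+t^{b+c}$ and multiplying by $1+t+\cdots+t^{a-1}$, I would read off
\[
  (\Delta^2 h)_j = \mathbf{1}_{[0,\,a-1]}(j) - \mathbf{1}_{[b,\,a+b-1]}(j) - \mathbf{1}_{[c,\,a+c-1]}(j) + \mathbf{1}_{[b+c,\,a+b+c-1]}(j),
\]
where $\mathbf{1}_I$ denotes the indicator function of the integer interval $I$; the asymmetric appearance of $a$ here is of course immaterial.

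From this formula the two claims are immediate. As a finite $\mathbb{Z}$-linear combination of indicator functions of integer intervals, $\Delta^2 h$ is constant on each block of the partition of $\mathbb{Z}$ cut out by the numbers $0,\,a,\,b,\,c,\,a+b,\,a+c,\,b+c,\,a+b+c$; in particular it is a step function. Since $(\Delta h)_j-(\Delta h)_{j-1}=(\Delta^2 h)_j$, the sequence $h'=\Delta h$ is arithmetic with a fixed common difference on each such block, hence is piecewise linear in $j$, the slope on a given piece being the corresponding constant value of $\Delta^2 h$. Finally, only two of the four indicators in the formula above carry a minus sign, so $(\Delta^2 h)_j\ge -2$ for every $j$; therefore the coefficient of $j$ in each linear piece of $h'$ is at least $-2$. (The bound $-2$ is sharp: for $a=b=c$ one gets $(\Delta^2 h)_a=-2$.)

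I do not anticipate a genuine obstacle; the only point to pin down is the convention for $\Delta$. In the lemma $h'=\Delta h$ is meant as the plain first difference $h_j-h_{j-1}$, which is what the argument above treats. If one instead wants the truncated difference $\max\{h_j-h_{j-1},0\}$ of Section~\ref{sec:k-slp}, one uses that the Hilbert function of a monomial complete intersection $\PPiii/(x_1^a,x_2^b,x_3^c)$ is unimodal (classical), so $h_j-h_{j-1}\ge 0$ precisely on an initial segment of indices: there the truncated difference coincides with the plain one, past it it is the constant $0$ (linear of slope $0$), and at the single index $j$ where the segment ends its slope is $-(\Delta h)_j$, which is $\ge(\Delta^2 h)_{j+1}\ge -2$ because $(\Delta h)_{j+1}\le 0$. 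Thus the conclusion holds under either reading.
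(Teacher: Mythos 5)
Your proof is correct, and it reaches the key estimate by a different computation than the paper. The paper counts lattice points in the box $[0,a-1]\times[0,b-1]\times[0,c-1]$ on the planes $k+l+m=j$, applies inclusion--exclusion to write $h_j$ as a signed sum of binomial coefficients $\binom{j-\ast+2}{2}$, and then uses Pascal's rule to express $h_j-h_{j-1}$ as $j+1$ minus three ramp functions $\max\{0,j-\ast+1\}$ plus three more; the slope bound $\ge -2$ is read off because at most three ramps of slope $-1$ can be active against the constant slope $+1$. You instead multiply the generating function by $(1-t)^2$ and factor it as $(1+t+\cdots+t^{a-1})(1-t^b)(1-t^c)$, which exhibits the second difference directly as a $\pm1$-combination of four interval indicators with only two minus signs; this is arguably the more transparent route, since it makes the constant $-2$ visibly the number of negative summands, and it yields the piecewise-linearity statement for free from the piecewise-constancy of $\Delta^2 h$. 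You are also more careful than the paper on one point: since the paper's $\Delta$ is the truncated difference $\max\{h_j-h_{j-1},0\}$, one must check that the truncation does not create a drop steeper than $-2$ at the index where $h_j-h_{j-1}$ changes sign; your observation that $-f_p\ge f_{p+1}-f_p\ge -2$ when $f_{p+1}\le 0$ (using unimodality of $h$) closes that small gap, which the paper's proof passes over silently.
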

\begin{proof}
The right-hand side of Equation~(\ref{eq:lem:harima-sakaki})
is equal to
\begin{align*}
\sum_{j=0}^{a+b+c-3}
\sum_{\substack{0 \le k \le a-1, \;\;
0 \le l \le b-1, \;\;
0 \le m \le c-1, \\
k+l+m = j}} t^{k+l+m},
\end{align*}
and hence $h_j$ is equal to the number of lattice points on the plane 
$P_j =
\{ v = (k,l,m) \in \ZZ^3 \;;\; k+l+m = j \text{~and~} k,l,m \ge 0 \}$
satisfying
$0 \le k \le a-1$,
$0 \le l \le b-1$ and
$0 \le m \le c-1$.
Thus we have
\begin{align*}
h_j &= 
\# P_j
\\ & \quad
- \# \{ v \in P_j \;;\; k \ge a \}
- \# \{ v \in P_j \;;\; l \ge b \}
- \# \{ v \in P_j \;;\; m \ge c \}
\\ & \quad
+ \# \{ v \in P_j \;;\; k \ge a, \; l \ge b \}
\\ & \quad
+ \# \{ v \in P_j \;;\; k \ge a, \; m \ge c \}
\\ & \quad
+ \# \{ v \in P_j \;;\; l \ge b, \; m \ge c \}
\\ & =
\binom{j+2}{2} 
- \binom{j-a+2}{2} 
- \binom{j-b+2}{2} 
- \binom{j-c+2}{2} 
\\ & \qquad
+ \binom{j-a-b+2}{2} 
+ \binom{j-b-c+2}{2}
+ \binom{j-a-c+2}{2} 
\end{align*}
for $0 \le j \le a+b+c-3$,
where the binomial coefficients $\binom{n}{m}$ are defined as zero
if $n<0$.
Note that the formula 
$\binom{n-1}{m-1} + \binom{n-1}{m} = \binom{n}{m}$
still holds unless $(n,m) = (0,0)$,
and that $\binom{n}{1} = \max\{0, n\}$.
Hence we have
\begin{align*}
& h_j - h_{j-1} = 
\\ & \quad
j+1 
- \max\{0,j-a+1\}
- \max\{0,j-b+1\}
- \max\{0,j-c+1\}
\\ & \quad
+ \max\{0,j-a-b+1\}
+ \max\{0,j-b-c+1\}
+ \max\{0,j-a-c+1\}.
\end{align*}
Therefore $h'_j = \max\{0, h_j - h_{j-1}\}$
is a piecewise linear function in $j$,
in which the coefficients of $j$ are at least $-2$
(and at most $1$).
\end{proof}
Finally we have the main theorem.
\begin{theorem}
\label{thm:gin-of-4var-ci}
Let $R = \PPiv$, and $I = (x_1^a, x_2^b, x_3^c, x_4^d)$,
where at least one of $a$, $b$, $c$ and $d$ is equal to two.
Then the generic initial ideal of $I$ 
is equal to the almost revlex ideal 
corresponding to the same Hilbert function.
\end{theorem}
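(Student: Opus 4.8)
The plan is to reduce the statement to showing that $R/I$ has the $2$-SLP and then to quote Proposition~\ref{prop:gin-for-4-slp}. First I would observe that a permutation of the variables changes neither $\gin(I)$, nor the Hilbert function of $R/I$, nor the almost revlex ideal attached to that Hilbert function, so without loss of generality $d = 2$; then $I = (x_1^a, x_2^b, x_3^c, x_4^2)$ and
$$ R/I \;\cong\; A \otimes_K B, \qquad A = \PPiii/(x_1^a, x_2^b, x_3^c), \quad B = K[x_4]/(x_4^2). $$
Since $A = K[x_1]/(x_1^a) \otimes_K K[x_2]/(x_2^b) \otimes_K K[x_3]/(x_3^c)$ is a tensor product of Artinian algebras with symmetric Hilbert functions, each having the SLP, it has the SLP by \cite{MR951211}; and, being a quotient of $\PPiii$, it then has the $2$-SLP, because for quotients of $\PPiii$ the $2$-SLP is equivalent to the SLP (see the remark following Definition~\ref{defn:k-slp}). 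Its Hilbert function $h_A$, being the product of the palindromic polynomials $1 + t + \cdots + t^{a-1}$, $1 + t + \cdots + t^{b-1}$ and $1 + t + \cdots + t^{c-1}$, is symmetric, and it is unimodal because $A$ has the SLP.

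The core of the proof is to apply Lemma~\ref{lem:(53.2)} with $k = 2$ to conclude that $R/I \cong A \otimes_K B$ has the $2$-SLP. This requires checking that $h_A$ and $\Delta h_A$ satisfy Conditions \textsf{(C1)} and \textsf{(C2)} of Lemma~\ref{lem:(51.1)}, and that $h_A$ satisfies Condition \textsf{(C3)} of Lemma~\ref{lem:(53.1)}. For $h_A$ itself, all three conditions fall out of symmetry and unimodality: if $h_i < h_{i+1}$ then $i$ and $i+1$ lie in the ascending part of $h_A$, so no value of $h_A$ can lie strictly between $h_i$ and $h_{i+1}$ (using symmetry to account for indices in the descending part), giving \textsf{(C1)}, and \textsf{(C2)} is the mirror statement; while for \textsf{(C3)}, if the maximum of $h_A$ is attained at a single index it is attained in the center, where $h_{i-1} = h_{i+1}$ by symmetry. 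For $\Delta h_A$ I would invoke Lemma~\ref{lem:harima-sakaki} and its proof: $h' := \Delta h_A$ is piecewise linear in $j$, with every slope lying in $\{-2,-1,0,1\}$. Then \textsf{(C1)} for $h'$ is immediate, since a rise of $h'$ has slope $+1$ and hence equals exactly $1$, leaving no integer strictly in between.

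The one genuinely nonroutine step, which I expect to be the main obstacle, is Condition \textsf{(C2)} for $h' = \Delta h_A$. A drop of $h'$ of slope $-1$ again leaves no integer strictly in between, so only a drop of slope $-2$ matters, and there the unique candidate value is $w = h'_i - 1$; the task is to show $w$ is attained by $h'$ at most once. For this I would read off the precise shape of $h'$ from the closed formula for $h_j - h_{j-1}$ obtained in the proof of Lemma~\ref{lem:harima-sakaki}: assuming $a \le b \le c$, the sequence $h'$ ascends with slope $1$ up to the value $a$, is constant equal to $a$, descends with slope $-1$ down to the value $\max(a+b-c,\,0)$, and finally --- only when $c \le a+b-1$ --- descends with slope $-2$ down to $0$. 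Thus a drop of slope $-2$ can occur only in this last stretch, where every skipped value $w$ satisfies $0 \le w < a+b-c \le a$; such a $w$ therefore differs from the plateau value $a$, from every value occurring on the slope-$(-1)$ stretch (all of which are $\ge a+b-c$), and --- by a parity argument on the arithmetic progression realized on the slope-$(-2)$ stretch --- from every value occurring there as well, so $w$ is realized only on the initial slope-$1$ stretch, at most once. This proves \textsf{(C2)} for $h'$, so Lemma~\ref{lem:(53.2)} applies and $R/I$ has the $2$-SLP. Since the Hilbert function of $R/I = A \otimes_K B$ equals $(1+t)$ times that of $A$ and is therefore symmetric, Proposition~\ref{prop:gin-for-4-slp} finally yields that $\gin(I)$ is the unique almost revlex ideal with the Hilbert function of $R/I$, which is exactly the assertion of the theorem.
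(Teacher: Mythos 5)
Your proof is correct and follows essentially the same route as the paper: reduce to $d=2$ by permuting variables, realize $R/I$ as $A \otimes_K K[x_4]/(x_4^2)$ with $A = \PPiii/(x_1^a,x_2^b,x_3^c)$, verify Conditions \textsf{(C1)}--\textsf{(C3)} for $h_A$ and \textsf{(C1)}--\textsf{(C2)} for $\Delta h_A$ using Lemma~\ref{lem:harima-sakaki}, apply Lemma~\ref{lem:(53.2)}, and conclude with Proposition~\ref{prop:gin-for-4-slp} since the Hilbert function of $R/I$ is symmetric. The only differences are minor: you invoke Lemma~\ref{lem:(53.2)} with $k=2$ where the paper claims the $3$-SLP (the $2$-SLP is all that Proposition~\ref{prop:gin-for-4-slp} requires, and is exactly what the verified conditions support), and your explicit shape-and-parity analysis of $\Delta h_A$ spells out why a value skipped by a slope-$(-2)$ drop is attained at most once elsewhere --- a point the paper compresses into the single observation that $h'_j - h'_{j+1} \le 2$.
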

\begin{proof}
If one of $a$, $b$, $c$ and $d$ is equal to one,
the theorem is reduced to the case of three variables,
and follows from \cite{MR2371960} or \cite{MR2292574}.
We consider the case where $a, b, c, d \ge 2$.
First we show that $R/I$ has the 3-SLP
for $a, b, c \ge 2$ and $d = 2$.
Let $A = \PPiii/(x_1^a, x_2^b, x_3^c)$,
and $h_A$ the Hilbert function of $A$.
Then $A$ has the 3-SLP, and 
$h_A$ satisfies Conditions 
\textsf{(C1)}, \textsf{(C2)} and \textsf{(C3)},
since $h_A$ is unimodal and symmetric.
The difference $\Delta h_A$ is of the form
$(1, 2, \ldots, k, h'_k, h'_{k+1}, \ldots, h'_m)$,
where $k \ge h'_k \ge h'_{k+1} \ge \cdots \ge h'_m > 0$.
Hence $\Delta h_A$ satisfies Condition \textsf{(C1)}.
Condition \textsf{(C2)} for $\Delta h_A$ also holds,
since $h'_j - h'_{j+1} \le 2$ for $j = k-1, k, \ldots, m$
($h'_{k-1} = k$ and $h'_{m+1} = 0$)
by Lemma~\ref{lem:harima-sakaki}.
Therefore it follows from Lemma~\ref{lem:(53.2)} that
$R/I \simeq A \otimes_K K[x_4]/(x_4^2)$
has the 3-SLP.

Since the $k$-SLP is independent of the permutation of the variables,
$R/I$ has the 3-SLP,
if at least one of $a$, $b$, $c$ and $d$ is equal to two.
As noted in Definition~\ref{defn:k-slp},
the 2-SLP, 3-SLP and 4-SLP are equivalent for quotient rings of $R$,
it follows from Proposition~\ref{prop:gin-for-4-slp} 
that $\gin(I)$ is almost revlex.
\end{proof}
We conclude this note with the smallest example
which does not fit into our theorem.
\begin{remark}
\label{rem:x^3,y^3,z^3,w^3}
Let $R = \PPiv$ and $I = (x_1^3,x_2^3,x_3^3,x_4^3)$.
We can show that the quotient ring $R/I$ does not have the 2-SLP
as follows.

$R/I$ has the SLP, 
and we can fix a Lefschetz element $x_1+x_2+x_3+x_4$
by changing the coordinate if needed,
since Lefschetz elements of $R/I$ are of the form
$ax_1 + bx_2 + cx_3 + dx_4$ ($abcd \ne 0$).
Put 
\begin{gather*}
  A = \PPiii/(x_1^3,x_2^3,x_3^3,(x_1+x_2+x_3)^3)
  \simeq
  R/I+(x_1+x_2+x_3+x_4).
\end{gather*}
The Hilbert function of $A$ is $(1,3,6,6,3)$.
Let $\ell = ax_1+bx_2+cx_3$ be a general linear form of $A$, 
and we look at the rank of the linear mapping
$\times \ell^3:A_1 \to A_4$.
Using a computer we have
\begin{align*}
x_1 \ell^3 &= 
3a(b-c)^2 x_1^2x_3^2 +
3b(2a-c)(b-c) x_1x_2x_3^2 +
3b^2(a-c) x_2^2x_3^2,
\\
x_2 \ell^3 &= 
3a^2(b-c) x_1^2x_3^2 +
3a(a-c)(2b-c) x_1x_2x_3^2 +
3b(a-c)^2 x_2^2x_3^2,
\\
x_3 \ell^3 &= 
-3a^2(b-c) x_1^2x_3^2
-3ab(a+b-2c) x_1x_2x_3^2
-3b^2(a-c) x_2^2x_3^2,
\end{align*}
which are equations in $A$,
and $\{x_1^2x_3^2, x_1x_2x_3^2, x_2^2x_3^2\}$ is a basis of $A_4$.
We can show that the determinant of $\times\ell^3:A_1\to A_4$ 
is equal to zero independent of $a$, $b$ and $c$,
and therefore $A$ does not have the SLP.
Hence $R/I$ does not have the 2-SLP.
\end{remark}

\bibliographystyle{alpha}
\bibliography{math}

\def\cprime{$'$} \def\Dbar{\leavevmode\lower.6ex\hbox to 0pt{\hskip-.23ex
  \accent"16\hss}D} \def\polhk#1{\setbox0=\hbox{#1}{\ooalign{\hidewidth
  \lower1.5ex\hbox{`}\hidewidth\crcr\unhbox0}}}
\begin{thebibliography}{HMNW03}

\bibitem[ACP07]{MR2371960}
Jeaman Ahn, Young~Hyun Cho, and Jung~Pil Park.
\newblock Generic initial ideals of {A}rtinian ideals having {L}efschetz
  properties or the strong {S}tanley property.
\newblock {\em J. Algebra}, 318(2):589--606, 2007.

\bibitem[Cim07]{MR2292574}
Mircea Cimpoea{\c{s}}.
\newblock Generic initial ideal for complete intersections of embedding
  dimension three with strong {L}efschetz property.
\newblock {\em Bull. Math. Soc. Sci. Math. Roumanie (N.S.)}, 50(98)(1):33--66,
  2007.

\bibitem[Eis95]{MR1322960}
David Eisenbud.
\newblock {\em Commutative algebra}, volume 150 of {\em Graduate Texts in
  Mathematics}.
\newblock Springer-Verlag, New York, 1995.
\newblock With a view toward algebraic geometry.

\bibitem[HMNW03]{MR1970804}
Tadahito Harima, Juan~C. Migliore, Uwe Nagel, and Junzo Watanabe.
\newblock The weak and strong {L}efschetz properties for {A}rtinian
  {$K$}-algebras.
\newblock {\em J. Algebra}, 262(1):99--126, 2003.

\bibitem[HW03]{MR2033004}
Tadahito Harima and Junzo Watanabe.
\newblock The finite free extension of {A}rtinian {$K$}-algebras with the
  strong {L}efschetz property.
\newblock {\em Rend. Sem. Mat. Univ. Padova}, 110:119--146, 2003.

\bibitem[HW08]{harima-wachi-comm-alg(was07072247)}
Tadahito Harima and Akihito Wachi.
\newblock Generic initial ideals, graded {B}etti numbers and $k$-{L}efschetz
  properties.
\newblock to appear in Comm. Algebra, 2008.

\bibitem[Wat87]{MR951211}
Junzo Watanabe.
\newblock The {D}ilworth number of {A}rtinian rings and finite posets with rank
  function.
\newblock In {\em Commutative algebra and combinatorics (Kyoto, 1985)},
  volume~11 of {\em Adv. Stud. Pure Math.}, pages 303--312. North-Holland,
  Amsterdam, 1987.

\bibitem[Wie04]{MR2111103}
Attila Wiebe.
\newblock The {L}efschetz property for componentwise linear ideals and
  {G}otzmann ideals.
\newblock {\em Comm. Algebra}, 32(12):4601--4611, 2004.

\end{thebibliography}
\end{document}